\numberwithin{equation}{section}
\newtheorem{thm}{Theorem}
\newtheorem{prop}[thm]{Proposition}
\newtheorem{rem}[thm]{Remark}
\newcommand{\bbP}{{\ensuremath{\mathbb P}} }
\newcommand{\cK}{{\ensuremath{\mathcal K}} }
\newcommand{\cP}{{\ensuremath{\mathcal P}} }
\newcommand{\cQ}{{\ensuremath{\mathcal Q}} }
\newcommand{\cZ}{{\ensuremath{\mathcal Z}} }
\newcommand{\gd}{\delta}
\newcommand{\gep}{\varepsilon}       
\renewcommand{\tilde}{\widetilde}          
\DeclareMathSymbol{\leqslant}{\mathalpha}{AMSa}{"36} 
\DeclareMathSymbol{\geqslant}{\mathalpha}{AMSa}{"3E} 
\DeclareMathSymbol{\eset}{\mathalpha}{AMSb}{"3F}     
\newcommand{\dd}{\text{\rm d}}             
\newcommand{\R}{\mathbb{R}}
\newcommand{\N}{\mathbb{N}}
\newcommand{\PEfont}{\mathrm}
\DeclareMathOperator{\p}{\ensuremath{\PEfont P}}
\DeclareMathOperator{\e}{\ensuremath{\PEfont E}}
\newcommand{\ind}{{\sf 1}}
\renewcommand{\epsilon}{\varepsilon} 
\renewcommand{\theta}{\vartheta} 
\renewcommand{\rho}{\varrho} 
\renewcommand{\phi}{\varphi}
\newenvironment{myenumerate}{%
\renewcommand{\theenumi}{{\rm(\arabic{enumi})}}%
\renewcommand{\labelenumi}{\theenumi}%
\begin{list}{\labelenumi}
	{%
	\setlength{\itemsep}{0.4em}%
	\setlength{\topsep}{0.5em}%
	\setlength\leftmargin{2.45em}%
	\setlength\labelwidth{2.05em}%
	\setlength{\labelsep}{0.4em}%
	\usecounter{enumi}%
	}%
	}%
{\end{list}
}
\newenvironment{Cenumerate}{%
\renewcommand{\theenumi}{{\rm(C\arabic{enumi})}}%
\renewcommand{\labelenumi}{\theenumi}%
\begin{list}{\labelenumi}
	{%
	\setlength{\itemsep}{0.4em}%
	\setlength{\topsep}{0.5em}%
	\setlength\leftmargin{2.45em}%
	\setlength\labelwidth{2.05em}%
	\setlength{\labelsep}{0.4em}%
	\usecounter{enumi}%
	}%
	}%
{\end{list}
}
\newenvironment{myitemize}{%
\begin{list}{$\bullet$}%
 	{%
	\setlength{\itemsep}{0.4em}%
	\setlength{\topsep}{0.5em}%
	\setlength\leftmargin{2.45em}%
	\setlength\labelwidth{2.05em}%
	\setlength{\labelsep}{0.4em}%
	}%
	}%
{\end{list}}
\renewenvironment{itemize}{
\begin{myitemize}}%
{\end{myitemize}}
\title[Localization for $(\nabla + \Delta)$-pinning models]{Localization for
(1+1)-dimensional pinning models\\ with $(\nabla + \Delta)$-interaction}
\author{Martin Borecki}
\address{TU Berlin, Institut f\"ur Mathematik,
Strasse des 17. Juni 136, 10623 Berlin, Germany}
\email{borecki@math.tu-berlin.de}
\author{Francesco Caravenna}
\address{Dipartimento di Matematica Pura e Applicata, Universit\`a
degli Studi di Padova, via Trie\-ste 63, 35121 Padova, Italy}
\email{francesco.caravenna\@@math.unipd.it}
\keywords{Pinning Model, Polymer Model, Linear Chain Model,
Phase Transition, Localization Phenomena, Gradient Interaction,
Laplacian Interaction, Free Energy, Markov Chain}
\subjclass[2010]{60K35, 82B41, 60J05}
\date{\today}
\newcommand{\Pe}{\mathbb{P}_{\varepsilon,N}}
\newcommand{\Ze}{\mathcal{Z}_{\varepsilon,N}}
\DeclareMathOperator{\esssup}{ess\, sup}
\newcommand{\Leb}{\text{\sl Leb}}
\begin{document}

\begin{abstract}
We study the localization/delocalization phase transition in a class of
directed models for a homogeneous linear chain attracted to a defect line.
The self-interaction of the chain is of mixed gradient and Laplacian kind,
whereas the attraction to the defect line is of $\delta$-pinning type, with
strength $\epsilon \ge 0$. It is known that, when the self-interaction is purely Laplacian,
such models undergo a \emph{non-trivial} phase transition:
to localize the chain at the defect line,
the reward $\gep$ must be greater than a strictly positive critical threshold
$\gep_c > 0$. On the other hand, when the self-interaction is purely gradient,
it is known that the transition is \emph{trivial}: an arbitrarily small
reward $\gep > 0$ is sufficient to localize the chain at the defect line ($\gep_c = 0$).
In this note we show that in the mixed gradient and Laplacian case,
under minimal assumptions on the interaction potentials,
the transition is always trivial, that is $\gep_c = 0$.
\end{abstract}

\maketitle


\section{Introduction}

We consider a simple directed model for a homogeneous linear chain
$\{(i,\phi_i)\}_{0 \le i \le N}$, such as a polymer, which is randomly distributed 
in space and is attracted to the line
$\{(i,0)\}_{0 \le i \le N}$ through a \emph{pinning interaction}, see Figure~\ref{fig:pinning}. 
We will often refer to $\{\phi_i\}_{i}$ as the \emph{field}.
We discuss the localization properties of the model as a function of
the attraction strength $\varepsilon \geq 0$ and of the characteristics
of the chains, that are embodied in two potentials $V_1$ and $V_2$.

\begin{figure}[t]
\begin{center}
\includegraphics[width=.9\textwidth]{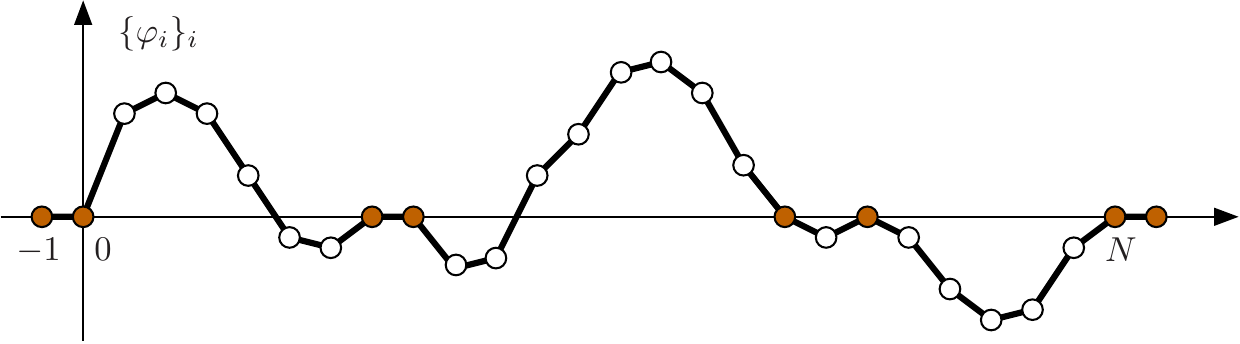}
\end{center}
\caption{\label{fig:pinning}A sample trajectory of the model $\bbP_{\gep, N}$.}
\end{figure}


\subsection{The model}

We first define the \emph{Hamiltonian}, which describes the
self-interaction of the field $\phi = \{\phi_i\}_i$:
\begin{equation} \label{eq:H}
	\mathcal{H}_{[-1,N+1]}(\phi) \;=\;
	\mathcal{H}_{[-1,N+1]}(\varphi_{-1},...,\varphi_{N+1}) \;:=\;
	\sum_{i=1}^{N+1}V_1(\nabla\varphi_i) \,+\,
	\sum_{i=0}^{N}V_2(\Delta\varphi_i) \,,
\end{equation}
where $N$ represents the length of the chain. The discrete gradient and Laplacian
of the field are defined respectively by $\nabla \phi_i := \phi_i - \phi_{i-1}$
and $\Delta \phi_i := \nabla \phi_{i+1} - \nabla \phi_i = \phi_{i+1} + \phi_{i-1} - 2 \phi_i$.
The precise assumptions on the potentials $V_1$ and $V_2$ are
stated below.

Given the strength of the pinning attraction $\varepsilon\geq 0$
between the chain and the defect line, we define
our model $\bbP_{\gep, N}$ as the following probability measure
on $\mathbb{R}^{N-1}$:
\begin{equation}\label{model}
    \Pe(\dd\varphi_1 \,, \ldots \,, \dd\varphi_{N-1})\ :=
    \frac{\exp(-\mathcal{H}_{[-1,N+1]}(\varphi))}{\Ze}\prod_{i=1}^{N-1}
    ( \varepsilon\delta_0(\dd\varphi_i) + \dd\varphi_i ) \,
\end{equation}
where we denote by $\delta_0(\cdot)$ 
the Dirac mass at zero, by $\dd\varphi_i = \Leb(\dd\phi_i)$ the Lebesgue measure on $\mathbb{R}$
and we choose for simplicity zero boundary conditions: 
$\varphi_{-1}=\varphi_0=\varphi_N=\varphi_{N+1}=0$
(see Figure~\ref{fig:pinning}).
The normalization constant $\Ze$ appearing in \eqref{model} plays an important role, as
we are going to see in a moment: it is called \emph{partition function} and is given by
\begin{equation} \label{eq:Z}
	\cZ_{\gep, N} \;=\; \int_{\R^{N-1}} e^{-\mathcal{H}_{[-1,N+1]}(\varphi)}
	\, \prod_{i=1}^{N-1} ( \varepsilon\delta_0(\dd\varphi_i) + \dd\varphi_i ) \,.
\end{equation}

We assume that the potentials $V_1,V_2:\mathbb{R}\rightarrow\mathbb{R}$
appearing in \eqref{eq:H} are measurable functions satisfying
the following conditions:
\begin{Cenumerate}
\item\label{C1} $V_1$ is bounded from below ($\inf_{x\in\R} V_1(x) > -\infty$),
symmetric ($V_1(x)=V_1(-x)$ for every $x\in\mathbb{R}$), such that
$\lim_{|x| \to \infty} V_1(x) = +\infty$ and $\int_\R e^{-2V_1(x)} \, \dd x < \infty$.

\item\label{C2} $V_2$ is bounded from below ($\inf_{x\in\R} V_2(x) > -\infty$),
bounded from above in a neighborhood of zero ($\sup_{|x| \le \gamma} V_2(x) < \infty$
for some $\gamma > 0$) and such that $\int_\R |x| \, e^{-V_2(x)} \, \dd x < \infty$.
\end{Cenumerate}
We stress that no continuity assumption is made. The symmetry of $V_1$
ensures that there is no ``local drift'' for the gradient of the field
(remarkably, no such assumption on $V_2$ is necessary; see
also Remark~\ref{th:remarkable} below). We point out that the hypothesis
that both $V_1$ and $V_2$ are finite everywhere could be relaxed, allowing them
to take the value $+\infty$ outside some interval $(-M,M)$, but we stick for simplicity
to the above stated assumptions.

\smallskip

The model $\bbP_{\gep, N}$ is an example of
a \emph{random polymer model}, more precisely a
(homogeneous) \emph{pinning model}. A lot of attention
has been devoted to this class of models in the recent mathematical 
literature (see~\cite{G,dH} for two beautiful monographs).

The main question, for models like ours, is whether
the pinning reward $\gep \ge 0$ is strong enough to localize the
field at the defect line for large $N$.
The case when the self-interaction of the field is of purely gradient type,
i.e., when $V_2 \equiv 0$ in \eqref{eq:H}, has been studied in depth
\cite{IY,CGZ,DGZ,BFO}, as well as the purely Laplacian case
when $V_1 \equiv 0$, cf.~\cite{CD,CD2}.
We now consider the mixed case when both $V_1 \not\equiv 0$
and $V_2 \not\equiv 0$, which is especially interesting from a physical viewpoint,
because of its direct relevance in modeling \emph{semiflexible polymers},
cf.~\cite{GKL}. Intuitively, the gradient interaction penalizes large elongations
of the chain while the Laplacian interaction penalizes curvature and bendings.



\subsection{Free energy and localization properties}

The standard way to capture the localization properties of models like ours
is to look at the exponential rate of growth (Laplace asymptotic behavior) as $N\rightarrow\infty$
of the partition function $\Ze$. More precisely, we define
the \emph{free energy} $F(\gep)$ of our model as
\begin{equation}\label{freeEnergy}
	F(\varepsilon) \,:=\, \lim_{N\rightarrow\infty}
	\frac{1}{N} \log \left( \frac{\Ze}{\cZ_{0,N}} \right) \,,
\end{equation}
where the limit is easily shown to exist by a standard super-additivity argument~\cite{G}. 

The function $\gep \mapsto \Ze$ is non-decreasing for fixed $N$
(cf. \eqref{eq:Z}), hence $\gep \mapsto F(\gep)$ is non-decreasing too. Recalling that
$F(0) = 0$, we define the \emph{critical value} $\gep_c$ as
\begin{equation} \label{eq:gepc}
	\gep_c \,:=\, \sup\{\gep \ge 0:\, F(\gep) = 0\} \,=\, \inf\{\gep \ge 0:\, F(\gep) > 0 \}
	\,\in\, [0,\infty] \,,
\end{equation}
and we say that our model $\{\bbP_{\gep, N}\}_{N\in\N}$ is
\begin{itemize}
\item \emph{delocalized} if $\gep < \gep_c$;
\item \emph{localized} if $\gep > \gep_c$.
\end{itemize}

This seemingly mysterious definition of localization and delocalization does
correspond to sharply different behaviors of the typical trajectories of our model.
More precisely, denoting by $\ell_N := \#\{1 \le i \le N-1:\, \phi_i = 0\}$
the number of contacts between the linear chain and the defect line, it is easily
shown by convexity arguments that 
\begin{itemize}
\item if $\varepsilon<\varepsilon_c$, for every $\gd >0$ there exists $c_\gd >0$ such that
\begin{equation}\label{chap0PathFallA}
	\Pe(\ell_N/N > \gd)\leq e^{-c_\gd N} \,, \qquad \text{for all }N\in\mathbb{N} \,;
\end{equation}

\item if $\varepsilon>\varepsilon_c$, there exists $\gd_\gep > 0$ and $c_\gep >0$ such that
\begin{equation}\label{chap0PathFallB}
 \Pe(\ell_N/N < \gd_\gep )\leq e^{-c_\gep N}\,, \qquad \text{for all }N\in\mathbb{N} \,.
\end{equation}
\end{itemize}
In words: if the model is delocalized then typically $\ell_N = o(N)$, while if
the model is localized then typically $\ell_N \ge \gd_\gep\, N$ with $\gd_\gep > 0$. 
We refer, e.g., to \cite[Appendix~A]{CD} for the proof of these facts.
We point out that the behavior of the model
at the critical point is a much more delicate issue,
which is linked to the regularity of the free energy.

\smallskip

Coming back to the critical value,
it is quite easy to show that $\gep_c < \infty$ (it is a by-product of our main result),
that is, the localized regime is non-empty. However,
it is not \emph{a priori} clear whether $\gep_c > 0$, i.e. 
whether the delocalized regime is non-empty.
For instance, in the purely Laplacian case ($V_1 \equiv 0$, cf.~\cite{CD}),
one has $\gep_c^\Delta > 0$.
On the other hand, in the purely gradient case ($V_2 \equiv 0$, cf.~\cite{BFO})
one has $\gep_c^\nabla = 0$ and the model is said to undergo a \emph{trivial phase transition}:
an arbitrarily small pinning reward is able to localize the linear chain.

The main result of this note is that in the general
case of mixed gradient and Laplacian interaction the phase transition
is always trivial.

\begin{thm}\label{Theorem}
For any choice of the potentials $V_1$, $V_2$ satisfying assumptions \ref{C1} and
\ref{C2} one has $\gep_c = 0$, i.e., $F(\gep) > 0$ for every $\gep > 0$.
\end{thm}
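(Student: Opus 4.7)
The plan is to combine a renewal-type lower bound on $\Ze$ with a local central limit theorem for the reference measure $\bbP_{0,L}$.

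The starting point is that, although a single pinning $\varphi_a=0$ does not decouple the Laplacian interaction (which couples three successive sites), a \emph{pair} of consecutive pinnings $\varphi_a=\varphi_{a+1}=0$ does: one checks directly that then $V_1(\nabla\varphi_{a+1})=V_1(0)$, $V_2(\Delta\varphi_a)=V_2(\varphi_{a-1})$, $V_2(\Delta\varphi_{a+1})=V_2(\varphi_{a+2})$, and similarly for the remaining gradient terms that straddle the pair, so every Hamiltonian term touching the pair reduces to a function of a single variable on one side of the pair. Expanding the product in~\eqref{eq:Z} as $\prod_i(\gep\delta_0(\dd\varphi_i)+\dd\varphi_i)=\sum_S\gep^{|S|}\prod_{i\in S}\delta_0(\dd\varphi_i)\prod_{i\notin S}\dd\varphi_i$ and restricting the sum to contact sets $S$ that are disjoint unions of such consecutive pairs, the restricted integral factorises. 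Using also that the boundary prescription $\varphi_{-1}=\varphi_0=\varphi_N=\varphi_{N+1}=0$ is itself a pair-pinning at each end, one obtains a renewal-type lower bound of the form
\[
    \Ze \;\geq\; c\sum_{K\geq 0}\bigl(\gep^{2}e^{-V_1(0)}\bigr)^{K}\sumtwo{L_0,\ldots,L_K\geq 1}{L_0+\cdots+L_K=N-K}\prod_{j=0}^{K}\cZ_{0,L_j}\,,
\]
where $L_j$ is the length of the $j$-th block between consecutive pin-pairs.

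A standard generating-function argument (see~\cite{G}) then shows that the exponential growth rate of the right-hand side relative to $\Zz$ is strictly positive for every $\gep>0$ as soon as the ``free return series''
\[
    \sum_{L\geq 1}e^{-L\lambda_0}\cZ_{0,L}\,,\qquad \lambda_0:=\lim_N\tfrac{1}{N}\log\Zz\,,
\]
diverges. Up to bounded factors, $e^{-L\lambda_0}\cZ_{0,L}$ is the joint density at $(0,0)$ of the pair $X_L:=(\varphi_{L-1},\varphi_L)$ under the Doob $h$-transform of the two-coordinate Markov chain $\{X_i\}$ by the Perron eigenfunction of its transfer operator, so divergence of the series is precisely a recurrence statement for this tilted chain at the origin. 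To verify it I would use a local CLT for $\varphi_L$. Under $\bbP_{0,L}$, the time-reversal $\varphi_i\mapsto\varphi_{L-i}$ is a symmetry of the Hamiltonian---the gradients flip sign but the symmetry of $V_1$ in~\ref{C1} compensates, while the Laplacians are invariant---so $\varphi_L\stackrel{d}{=}-\varphi_L$ and in particular $\e_{0,L}[\varphi_L]=0$, while $\var_{0,L}(\varphi_L)=O(L)$ follows from the integrability hypotheses~\ref{C1}--\ref{C2}. A local CLT for the centred, $O(L)$-variance field $\varphi_L$ then gives a density at $0$ of order $L^{-1/2}$, and since the increment $\varphi_L-\varphi_{L-1}$ has a density uniformly $O(1)$ at the origin, the joint density of $(\varphi_{L-1},\varphi_L)$ at $(0,0)$ is also at least of order $L^{-1/2}$. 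Since $\sum_L L^{-1/2}=\infty$, the series diverges and we conclude $F(\gep)>0$ for every $\gep>0$, i.e., $\gep_c=0$.

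The main obstacle is the local CLT in the last step: under the minimal hypotheses~\ref{C1}--\ref{C2} the potentials are only measurable and integrable, with no regularity, so no classical local limit theorem applies off the shelf. The natural route is a Fourier-analytic study of the transfer operator of $\{X_i\}$ (equivalently, of the characteristic function of $\varphi_L$); the symmetry of $V_1$ plays a genuine role there via the time-reversal cancellation of the drift, and the integrability of $e^{-2V_1}$ provides the spectral gap of the transfer operator that is needed to upgrade the Gaussian CLT into a Stone-type density estimate.
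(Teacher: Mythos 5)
Your renewal skeleton is exactly the one used in the paper: pinning \emph{pairs} of consecutive sites decouples the mixed gradient--Laplacian Hamiltonian, and the resulting lower bound reduces the whole problem to the divergence of the ``free return series'', i.e.\ to $\sum_N \varphi_N^{(0,0)}(0,0) = \infty$ in the paper's notation (equivalently $\sum_L \lambda^{-L}\cZ_{0,L}=\infty$), which forces the tilting exponent $\mu_\gep$ of the induced renewal process to be strictly positive and hence $F(\gep)\ge\mu_\gep>0$. Up to that reduction your argument coincides with the paper's.

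The genuine gap is in how you propose to establish the divergence. You rest it on a local CLT giving the joint density of $(\varphi_{L-1},\varphi_L)$ at $(0,0)$ of order $L^{-1/2}$, and you yourself flag that no off-the-shelf local limit theorem applies; the situation is in fact worse than a missing technical lemma. Under \ref{C1}--\ref{C2} only $\int_\R |x|\,e^{-V_2(x)}\,\dd x<\infty$ is assumed, so the increments of the tilted (Doob-transformed) chain are not even guaranteed to have finite second moments: already your premise $\var(\varphi_L)=O(L)$, let alone a Stone-type density estimate for merely measurable potentials, is out of reach, and the paper explicitly treats the $N^{-1/2}$ behavior of $\varphi_N^{(0,0)}(0,0)$ as a \emph{conjecture}. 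The observation you are missing is that the sharp order is not needed: divergence of the series only requires a bound of order $1/N$, and this can be proved by elementary means. The paper splits the bridge at its midpoint, uses the symmetry of $V_1$ and Jensen's inequality to bound $\varphi_{2N+1}^{(0,0)}(0,0)$ from below by a constant times $\frac{1}{c_N}\big[\p^{(0,0)}(|W_N|\le c_N,\ |Y_N|\le \gamma/2)\big]^2$, and then makes this probability bounded away from zero using only the first-moment estimate $\e^{(0,0)}|W_N|\le CN$ (which follows from a drift condition and Harris recurrence of the tilted chain) together with the ergodic theorem, choosing the window $c_N$ proportional to $N$. This yields $\varphi_{2N+1}^{(0,0)}(0,0)\ge C_1/N$, whose sum diverges, and closes the proof with no local limit theorem and no second-moment hypotheses. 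As written, your proof is incomplete at precisely this step, and the Fourier-analytic route you sketch would require strengthening the assumptions of the theorem.
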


Generally speaking, it may be expected that the gradient interaction terms should dominate over
the Laplacian ones, at least when $V_1$ and $V_2$ are comparable functions.
Therefore, having just recalled that $\gep_c^\nabla = 0$, Theorem~\ref{Theorem}
does not come as a surprise. Nevertheless, our assumptions \ref{C1} and \ref{C2}
are very general and allow for strikingly different asymptotic behavior of the potentials:
for instance, one could choose $V_1$ to grow only logarithmically and $V_2$
exponentially fast (or even more). The fact that the gradient interaction dominates
even in such extreme cases is quite remarkable.

\begin{rem}\rm
Our proof yields actually an explicit lower bound on the free energy, which is however
quite poor. This issue is discussed in detail
in Remark~\ref{th:improving} in section~\ref{sec:lb} below.
\end{rem}

\begin{rem}\rm
Theorem~\ref{Theorem} was first proved in the Ph.D. thesis \cite{B} in the special case
when both the interaction potentials are quadratic: 
$V_1(x) = \frac{\alpha}{2}\,x^2$ and $V_2(x) = \frac{\beta}{2}\,x^2$, for any
$\alpha, \beta > 0$.
We point out that, with such a choice for the potentials, 
the free model $\bbP_{0,N}$ is a Gaussian law
and several explicit computations are possible.
\end{rem}

\subsection{Organization of the paper}

The rest of the paper is devoted to the proof of the Theorem~\ref{Theorem}, which is organized
in two parts:
\begin{itemize}
\item in section~\ref{sec:Markov} we give a basic representation of the free model ($\gep = 0$)
as the bridge of an integrated Markov chain, and we study some asymptotic properties of
this hidden Markov chain;
\item in section~\ref{sec:lb} we give an explicit lower bound on the partition function
$\cZ_{\gep, N}$ which, together with the estimates obtained in section~\ref{sec:Markov}, yields
the positivity of the free energy $F(\gep)$ for every $\gep > 0$, hence the proof of
Theorem~\ref{Theorem}.
\end{itemize}
Some more technical points are deferred to Appendix~\ref{sec:Harris}.


\subsection{Some recurrent notation and basic results}

We set $\R^+ = [0,\infty)$, $\N := \{1,2,3,\ldots\}$ and $\N_0 := \N \cup \{0\} = \{0,1,2,\ldots\}$.
We denote by $\Leb$ the Lebesgue measure on $\R$.

We denote by $L^p(\R)$, for $p \in [1,\infty]$,
the Banach space of (equivalence classes of) measurable functions
$f: \R \to \R$ such that $\|f\|_p < \infty$, where
$\|f\|_p := (\int_\R |f(x)|^p \, \dd x)^{1/p}$ for $p \in [1,\infty)$
and $\|f\|_\infty := \esssup_{x\in\R} |f(x)|
= \inf\{M>0:\, \Leb\{x \in \R:\, |f(x)|>M\}=0\}$.

Given two measurable functions $f, g: \R \to \R^+$, their convolution
is denoted as usual by $(f * g)(x) := \int_\R f(x-y) \, g(y) \, \dd y$.
We recall that if $f \in L^1(\R)$ and $g \in L^\infty(\R)$ then
$f * g$ is bounded and continuous, cf. Theorem~D.4.3 in~\cite{cf:MT}.


\medskip

\section{A Markov chain viewpoint}

\label{sec:Markov}

We are going to construct a Markov chain which will be the basis of our analysis.
Consider the linear integral operator $f \mapsto \cK f$ defined
(for a suitable class of functions $f$) by
\begin{equation} \label{eq:K}
 (\mathcal{K}f)(x) := \int_\R k(x,y)f(y)\, \dd y \,, \qquad
 \text{where} \quad  k(x,y) := e^{-V_1(y)-V_2(y-x)} \,.
\end{equation}
The idea is to modify $k(x,y)$ with boundary terms to make $\cK$ a probability kernel.


\subsection{Integrated Markov chain}

By assumption \ref{C1} we have $\| e^{-2 V_1} \|_1 < \infty$.
It also follows by assumption \ref{C2} that $e^{-V_2} \in L^1(\R)$,
because we can write
\begin{equation*}
	\| e^{-V_2} \|_1 \;=\; \int_\R e^{-V_2(x)} \,\dd x \;\le\; 2 \sup_{x \in [-1,1]} e^{-V_2(x)} 
	\,+\, \int_{\R \setminus [-1,1]} |x| \, e^{-V_2(x)} \, \dd x \;<\; \infty \,.
\end{equation*}
Since we also have $e^{-V_2} \in L^\infty(\R)$, again by \ref{C2}, it follows that
$e^{-V_2} \in L^p(\R)$ for all $p \in [1,\infty]$, in particular
$\| e^{-2 V_2} \|_1 < \infty$. We then obtain
\begin{equation*}
	\int_{\R\times\R} k(x,y)^2 \, \dd x \, \dd y \;=\;
	\int_\R e^{-2 V_1(y)} \left( \int_\R e^{-2 V_2(y-x)} \, \dd x \right) \dd y \;=\;
	\| e^{-2 V_1} \|_1 \, \| e^{-2 V_2} \|_1 \;<\; \infty \,.
\end{equation*}
This means that $\cK$ is Hilbert-Schmidt, hence a compact operator on $L^2(\R)$.
Since $k(x,y) \ge 0$ for all $x,y \in \R$, we can then apply an infinite dimensional
version of the celebrated Perron-Frobenius Theorem.
More precisely, Theorem~1 in~\cite{Z} ensures that the spectral radius $\lambda>0$ of
$\mathcal{K}$ is an isolated eigenvalue, with corresponding right and left
eigenfunctions $v,w\in L^2(\mathbb{R})$ satisfying $w(x) > 0$ and $v(x)>0$ for almost
every $x\in\R$:
\begin{equation} \label{eq:eigeneq}
   v(x) \;=\; \frac{1}{\lambda}\int_{\mathbb{R}}k(x,y)\,v(y)\, \dd y \,,
   \qquad
   w(x) \;=\; \frac{1}{\lambda}\int_{\mathbb{R}}w(y)\, k(y,x)\,\dd y \,.
\end{equation}
These equations give a canonical definition of $v(x)$ and $w(x)$ for every
$x\in\R$. Since $k(x,y) > 0$ for all $x,y \in \R$,
it is then clear that $w(x)>0$ and $v(x)>0$ \emph{for every} $x\in\mathbb{R}$.

We can now define a probability kernel $\cP(x,\dd y)$ by setting
\begin{equation} \label{eq:p}
	\cP(x,\dd y) \;:=\; p(x,y) \,\dd y
	\;:=\; \frac{1}{\lambda} \, \frac{1}{v(x)} \, k(x,y) \, v(y) \,.
\end{equation}
Since $\cP(x, \R) = \int_\R p(x,y) \, \dd y = 1$ for every $x \in \R$,
we can define a Markov chain on $\R$ with transition kernel $\cP(x, \dd y)$.
More precisely, for $a,b\in\mathbb{R}$ let $(\Omega,\mathcal{A},\p^{(a,b)})$ be a probability space on which is defined a Markov chain $Y = \{Y_i\}_{i\in\N_0}$
on $\R$ such that
\begin{equation}\label{eq:Y}
	Y_0 \;=\; a \,, \qquad
	\p^{(a,b)}(Y_{n+1} \in \dd y \,|\, Y_n=x) \;=\;
    \cP(x, \dd y) \,,
\end{equation}
and we define the corresponding \emph{integrated Markov chain} $W = \{W_i\}_{i\in\N_0}$ setting
\begin{equation} \label{eq:W}
	W_0 \;=\; b \,, \qquad W_n \;=\; b + Y_1 + \ldots + Y_n \,.
\end{equation}
The reason for introducing such processes is that they are closely related
to our model, as we show in Proposition~\ref{PropFreeModel} below.
We first need to compute explicitly
the finite dimensional distributions of the process $W$.

\begin{prop}\label{PropLawOfW}
For every $n\in\mathbb{N}$, setting $w_{-1}:=b-a$ and $w_0:=b$, we have
\begin{equation} \label{eq:PropLawOfW}
    \p^{(a,b)}\left((W_1,...,W_n) \in (\dd w_1,...,\dd w_n) \right)
    \;=\; \frac{v(w_n-w_{n-1})}{\lambda^n\,v(a)} \,
    e^{-\mathcal{H}_{[-1,n]}(w_{-1},...,w_n)}
    \prod_{i=1}^n \dd w_i \,.
\end{equation}
\end{prop}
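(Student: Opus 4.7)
The plan is a direct computation: write down the joint density of $(Y_1,\dots,Y_n)$ via the Markov property, do the linear change of variables $y_i = w_i - w_{i-1}$ (which has unit Jacobian), and then recognize the exponential factor as the Hamiltonian.

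More precisely, I would start from the Markov property: since $Y_0 = a$ under $\p^{(a,b)}$, the joint law of $(Y_1,\dots,Y_n)$ is
\begin{equation*}
\p^{(a,b)}\bigl((Y_1,\dots,Y_n) \in (\dd y_1,\dots,\dd y_n)\bigr)
\;=\; \prod_{i=1}^{n} p(y_{i-1},y_i) \,\dd y_i,
\qquad y_0 := a.
\end{equation*}
Inserting the definition $p(x,y) = \frac{1}{\lambda}\frac{v(y)}{v(x)}\,k(x,y)$ from \eqref{eq:p} and telescoping the ratio of $v$'s, the prefactor collapses to $v(y_n)/(\lambda^n v(a))$, leaving
\begin{equation*}
\prod_{i=1}^{n} p(y_{i-1},y_i)
\;=\; \frac{v(y_n)}{\lambda^{n}\, v(a)} \prod_{i=1}^{n} k(y_{i-1},y_i)
\;=\; \frac{v(y_n)}{\lambda^{n}\, v(a)} \, \exp\!\Bigl(-\sum_{i=1}^{n}\bigl[V_1(y_i) + V_2(y_i - y_{i-1})\bigr]\Bigr),
\end{equation*}
using the explicit form of $k$ in \eqref{eq:K}.

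Next, I would pass to the $W$-variables via $w_i = b + y_1 + \dots + y_i$ (for $1 \le i \le n$), which together with the conventions $w_0 = b$, $w_{-1} = b-a$ gives $y_i = w_i - w_{i-1} = \nabla w_i$ for every $0 \le i \le n$, and in particular $y_0 = w_0 - w_{-1} = a$ as required. This transformation has Jacobian $1$, so the joint density of $(W_1,\dots,W_n)$ is obtained by simply substituting $y_i = w_i - w_{i-1}$ everywhere.

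The last step is pure bookkeeping: $V_1(y_i) = V_1(\nabla w_i)$ for $i=1,\dots,n$ matches the first sum in $\cH_{[-1,n]}$, while $V_2(y_i - y_{i-1}) = V_2(\nabla w_i - \nabla w_{i-1}) = V_2(\Delta w_{i-1})$ for $i=1,\dots,n$ gives the sum $\sum_{j=0}^{n-1} V_2(\Delta w_j)$, which matches the second sum in $\cH_{[-1,n]}$ (applying the Hamiltonian \eqref{eq:H} with the index shift $N+1 \mapsto n$). The prefactor $v(y_n)/(\lambda^n v(a))$ becomes $v(w_n - w_{n-1})/(\lambda^n v(a))$, yielding exactly \eqref{eq:PropLawOfW}. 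There is no real obstacle here; the only thing to be careful about is that the boundary term $w_{-1} = b-a$ is chosen precisely so that $y_0 = a$ produces the correct ``$\Delta$'' at the left endpoint, and this is what makes the whole Hamiltonian appear cleanly.
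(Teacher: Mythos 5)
Your proof is correct and follows essentially the same route as the paper: both compute the joint density of $(Y_1,\dots,Y_n)$ as a product of the kernels $p(y_{i-1},y_i)$, telescope the ratio of $v$'s to get the prefactor $v(y_n)/(\lambda^n v(a))$, and then identify the exponent with $\cH_{[-1,n]}$ under the unit-Jacobian change of variables $y_i = w_i - w_{i-1}$ (the paper merely runs the identification in the reverse direction, starting from the right-hand side of \eqref{eq:PropLawOfW}). The index bookkeeping, including the role of $w_{-1}=b-a$ in producing $y_0=a$ and the sum $\sum_{j=0}^{n-1}V_2(\Delta w_j)$, matches the paper's computation exactly.
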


\begin{proof}
Since $Y_i=W_i-W_{i-1}$ for all $i\geq 1$, the law of $(W_1,...,W_n)$ is determined by 
the law of $(Y_1,...,Y_n)$.
If we set $y_i:=w_i-w_{i-1}$ for $i\geq 2$ and $y_1:=w_1-b$, it then
suffices to show that the right hand side of equation \eqref{eq:PropLawOfW}
is a probability measure under which the variables $(y_i)_{i=1,\ldots,n}$
are distributed like the first $n$ steps of a Markov chain starting at $a$ with
transition kernel $p(x,y)$. To this purpose, the Hamiltonian can be rewritten as
\begin{equation*}
	\mathcal{H}_{[-1,n]}(w_{-1},...,w_n)
	\;=\; V_1(y_1) + V_2(y_{1}-a) +
	\sum_{i=2}^n \big( V_1(y_i) + V_2(y_{i}-y_{i-1}) \big) \,.
\end{equation*}
Therefore, recalling the definitions
\eqref{eq:K} of $k(x,y)$ and \eqref{eq:p} of $p(x,y)$, we can write
\begin{align*}
       \frac{v(w_n-w_{n-1})}{\lambda^n \, v(a)} e^{-\mathcal{H}_{[-1,n]}(w_{-1},...,w_n)}
        & \;=\; \frac{v(y_n)}{\lambda^n \, v(a)}\,k(a,y_1) \prod_{i=2}^n k(y_{i-1},y_i) \\
        & \;=\; p(a,y_1) \, \prod_{i=2}^n p(y_{i-1},y_i) \,,
\end{align*}
which is precisely the density of $(Y_1,...,Y_n)$ under $\p^{(a,b)}$ with respect to the
Lebesgue measure $\dd y_1\cdots \dd y_n$. Since the map from $(w_i)_{i=1,\ldots,n}$
to $(y_i)_{i=1,\ldots,n}$ is linear with determinant one, the proof is completed.
\end{proof}

For $n\geq 2$ we denote by $\varphi_{n}^{(a,b)}(\cdot,\cdot)$ the density of
the random vector $(W_{n-1},W_{n})$:
\begin{equation} \label{eq:phi}
	\varphi_{n}^{(a,b)}(w_1,w_2)
	\;:=\; \frac{\p^{(a,b)}\left( (W_{n-1},W_n) \in (\dd w_1,\dd w_2) \right)}
	{\dd w_1\dd w_2} \,, \qquad \text{for } w_1, w_2 \in \R \,.
\end{equation}
We can now show that our model $\bbP_{\gep, N}$ in the free case, that is for $\gep = 0$,
is nothing but a bridge of the integrated Markov chain $W$.

\begin{prop}\label{PropFreeModel}
For every $N\in\N$ the following relations hold:
\begin{align}
	\label{eq:bridgeP}
	\mathbb{P}_{0,N}(.) &\;=\; \p^{(0,0)} \big( \, (W_1,...,W_{N-1})\in \,\cdot\, \big|
    \, W_N=W_{N+1}=0 \, \big) \,, \\
    \label{eq:bridgeZ}
    \mathcal{Z}_{0,N} &\;=\; \lambda^{N+1}\,\varphi_{N+1}^{(0,0)}(0,0) \,.
\end{align}
\end{prop}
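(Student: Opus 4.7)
The plan is straightforward: apply Proposition~\ref{PropLawOfW} with $a=b=0$ and $n=N+1$ to get the explicit joint Lebesgue density of $(W_1,\ldots,W_{N+1})$ under $\p^{(0,0)}$, and then condition on $\{W_N=W_{N+1}=0\}$, checking that the resulting conditional density matches the free model density prescribed by \eqref{model}.

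First I would write down the joint density of $(W_1,\ldots,W_{N+1})$ from Proposition~\ref{PropLawOfW}, noting that with $a=b=0$ one has $w_{-1}=w_0=0$. It takes the form
$$\frac{v(w_{N+1}-w_N)}{\lambda^{N+1}\,v(0)}\,e^{-\mathcal{H}_{[-1,N+1]}(0,0,w_1,\ldots,w_{N+1})}.$$
By definition, integrating over $(w_1,\ldots,w_{N-1})$ yields $\varphi_{N+1}^{(0,0)}(w_N,w_{N+1})$, so the standard disintegration formula identifies the conditional density of $(W_1,\ldots,W_{N-1})$ given $(W_N,W_{N+1})=(w_N,w_{N+1})$ as the ratio of the two.

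The key observation is that specializing to $(w_N,w_{N+1})=(0,0)$ produces the cancellation $v(w_{N+1}-w_N)=v(0)$, so the conditional density simplifies to
$$\frac{e^{-\mathcal{H}_{[-1,N+1]}(0,0,w_1,\ldots,w_{N-1},0,0)}}{\lambda^{N+1}\,\varphi_{N+1}^{(0,0)}(0,0)}\,,$$
which has precisely the Boltzmann form of $\mathbb{P}_{0,N}$ from \eqref{model} with $\gep=0$ and zero boundary conditions. Since this conditional density must integrate to $1$ against $\prod_{i=1}^{N-1} \dd w_i$, rearranging the normalization and comparing with \eqref{eq:Z} at $\gep=0$ immediately yields $\cZ_{0,N}=\lambda^{N+1}\varphi_{N+1}^{(0,0)}(0,0)$, which is \eqref{eq:bridgeZ}. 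The identity \eqref{eq:bridgeP} then follows at once, since both sides are absolutely continuous measures on $\R^{N-1}$ with the same density $e^{-\mathcal{H}_{[-1,N+1]}}/\cZ_{0,N}$.

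There is no real obstacle here: the proof is a direct algebraic consequence of Proposition~\ref{PropLawOfW}, driven by the fortunate cancellation at $w_N=w_{N+1}=0$. The only mild technical point is legitimizing the conditioning on the null event $\{W_N=W_{N+1}=0\}$, which is handled via regular conditional probabilities and requires $\varphi_{N+1}^{(0,0)}(0,0)$ to be well-defined and strictly positive. This follows from the pointwise positivity of $v$ and $k$ together with continuity of $\varphi_{N+1}^{(0,0)}$, which can be extracted from the $L^1 * L^\infty$ continuity fact recalled in the Notation subsection.
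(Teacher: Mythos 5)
Your proposal is correct and follows essentially the same route as the paper: apply Proposition~\ref{PropLawOfW} with $a=b=0$, $n=N+1$, exploit the cancellation $v(w_{N+1}-w_N)=v(0)$ at the boundary values $w_N=w_{N+1}=0$, identify the conditional density of $(W_1,\ldots,W_{N-1})$ given $W_N=W_{N+1}=0$ with the Boltzmann density of $\bbP_{0,N}$, and read off $\cZ_{0,N}=\lambda^{N+1}\varphi_{N+1}^{(0,0)}(0,0)$ from the normalization. The paper performs exactly this computation (obtaining \eqref{eq:bridgeZ} by taking $A=\R^{N-1}$ in the conditional formula), so no further comment is needed.
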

\begin{proof}
By Proposition \ref{PropLawOfW}, for every measurable subset $A \subseteq \R^{N-1}$
we can write
\begin{equation} \label{eq:interm}
\begin{split}
	& \p^{(0,0)} \big( (W_1,...,W_{N-1}) \in A \,\big|\,
	W_N=W_{N+1}=0 \big) \\
	& \qquad \quad \;=\; \frac{1}{\lambda^{N+1}\,\varphi_{N+1}^{(0,0)}(0,0)} \; \int_A
	e^{-\mathcal{H}_{[-1,N+1]}(w_{-1},...,w_{N+1})}\prod_{i=1}^{N-1}\dd w_i \,,
\end{split}
\end{equation}
where we set $w_{-1}=w_0=w_N=w_{N+1}=0$. Choosing $A = \R^{N-1}$ and recalling
the definition \eqref{eq:Z} of the partition function $\cZ_{\gep, N}$, we obtain
relation \eqref{eq:bridgeZ}. Recalling the definition \eqref{model} of our model
$\bbP_{\gep, N}$ for $\gep = 0$, we then see that \eqref{eq:interm} is nothing
but \eqref{eq:bridgeP}.
\end{proof}


\subsection{Some asymptotic properties}
\label{sec:asymp}

We now discuss some basic properties of the Markov chain
$Y = \{Y_i\}_{i\in\N_0}$, defined in \eqref{eq:Y}. We recall that the underlying probability
measure is denoted by $\p^{(a,b)}$ and we have $a = Y_0$.
The parameter $b$ denotes the starting point $W_0$ of the integrated Markov
chain $W = \{W_i\}_{i\in\N_0}$ and is irrelevant for the study of $Y$,
hence we mainly work under $\p^{(a,0)}$.

\smallskip

Since $p(x,y) > 0$ for all $x,y \in \R$, cf. \eqref{eq:p}
and \eqref{eq:K}, the Markov chain $Y$ is \emph{$\phi$-irreducible}
with $\phi = \Leb$:
this means (cf.~\cite[\S4.2]{cf:MT}) that for every measurable subset $A \subseteq \R$
with $\Leb(A) > 0$ and for every $a \in \R$ there exists
$n \in \N$, possibly depending on $a$ and $A$, such that $\p^{(a,0)}(Y_n \in A) > 0$.
In our case we can take $n=1$, hence the chain $Y$ is also \emph{aperiodic}.

Next we observe that $\int_\R v(x) \, w(x) \, \dd x \le \|v\|_2 \, \|w\|_2 < \infty$,
because $v ,w \in L^2(\R)$ by construction.
Therefore we can define the probability measure $\pi$ on $\R$ by
\begin{equation} \label{eq:pi}
	\pi(\dd x) \;:=\; \frac{1}{c} \, v(x) \, w(x) \, \dd x \,, \qquad
	\text{where} \qquad c := \int_\R v(x) \, w(x) \, \dd x \,.
\end{equation}
The crucial observation is that $\pi$ is an invariant probability
for the transition kernel $\cP(x,\dd y)$: from \eqref{eq:p} and \eqref{eq:eigeneq} we have
\begin{equation} \label{eq:invpi}
\begin{split}
	\int_{x \in \R} \pi(\dd x) \, \cP(x, \dd y)
	& \;=\; \int_{x \in \R} 	\frac{v(x) \, w(x)}{c} \,\dd x \,
	\frac{k(x,y) \, v(y)}{\lambda \, v(x)} \, \dd y \\
	& \;=\; \frac{w(y) \, v(y)}{c} \, \dd y \;=\; \pi(\dd y) \,.
\end{split}
\end{equation}
Being $\phi$-irreducible and admitting an invariant probability measure,
the Markov chain $Y = \{Y_i\}_{i\in\N_0}$ is \emph{positive recurrent}.
For completeness, we point out that
$Y$ is also Harris recurrent, hence it is a \emph{positive Harris chain}, cf. \cite[\S10.1]{cf:MT},
as we prove in Appendix~\ref{sec:Harris}
(where we also show that $\Leb$ is a maximal irreducibility measure for $Y$).

\smallskip

Next we observe that the right eigenfunction \emph{$v$ is bounded and continuous}:
in fact, spelling out the first relation in \eqref{eq:eigeneq}, we have
\begin{equation} \label{eq:vcont}
	v(x) \;=\; \frac{1}{\lambda} \int_\R e^{-V_2(y-x)} \, e^{-V_1(y)} \, v(y) \, \dd y
	\;=\; \frac{1}{\lambda} \big( e^{-V_2} * (e^{-V_1} \, v) \big)(x) \,.
\end{equation}
By construction $v \in L^2(\R)$ and by assumption \ref{C1}
$e^{-V_1} \in L^2(\R)$, hence
$(e^{-V_1} \, v) \in L^1(\R)$. Since $e^{-V_2} \in L^\infty(\R)$ by assumption \ref{C2},
it follows by \eqref{eq:vcont} that $v$, being
the convolution of a function in $L^\infty(\R)$ with a function in $L^1(\R)$,
is bounded and continuous. In particular, $\inf_{|x|\le M} v(x) > 0$ for every $M > 0$,
because $v(x) > 0$ for every $x \in \R$, as we have already
remarked (and as it is clear from \eqref{eq:vcont}).

\smallskip

Next we prove a suitable \emph{drift condition} on the kernel $\cP$.
Consider the function
\begin{equation} \label{eq:U}
	U(x) \;:=\; \frac{|x| \, e^{V_1(x)}}{v(x)} \,,
\end{equation}
and note that
\begin{equation} \label{eq:cpu}
\begin{split}
	(\cP U)(x) & \;=\; \int_\R p(x,y) \, U(y) \, \dd y \;=\;
	\frac{1}{\lambda\, v(x)} \int_\R e^{-V_2(y-x)} \, |y| \,\dd y \\
	& \;=\; \frac{1}{\lambda\, v(x)} \int_\R e^{-V_2(z)} \, |z+x| \,\dd z
	\;\le\; \frac{c_0 \,+\, c_1 \, |x|}{\lambda\, v(x)} \,,
\end{split}
\end{equation}
where $c_0 := \int_\R |z| \, e^{-V_2(z)} \, \dd z < \infty$ and
$c_1 := \int_\R e^{-V_2(z)} \, \dd z < \infty$
by our assumption \ref{C2}. Then we fix $M \in (0,\infty)$ such that
\begin{equation*}
	U(x) \,-\, (\cP U)(x) \;=\;
	\frac{|x| \, e^{V_1(x)}}{v(x)} \,-\,
	\frac{c_1 \, |x| \,+\, c_0}{\lambda \, v(x)}
	\;\ge\; \frac{1 + |x|}{v(x)} \,, \qquad
	\text{for } |x| > M \,.
\end{equation*}
This is possible because $V_1(x) \to \infty$ as $|x| \to \infty$,
by assumption \ref{C1}. 	Next we observe that
\begin{equation*}
	b \;:=\; \sup_{|x| \le M} \big( (\cP U)(x) - U(x) \big) \;<\; \infty \,,
\end{equation*}
as it follows from \eqref{eq:U} and \eqref{eq:cpu} recalling that $v$
is bounded and $\inf_{|x|\le M} v(x) > 0$ for all $M>0$. Putting together these
estimates, we have shown in particular that
\begin{equation} \label{eq:drift}
	(\cP U)(x) \,-\, U(x) \;\le\; -\frac{1 + |x|}{v(x)} \,+\, b \, \ind_{[-M,M]}(x) \,.
\end{equation}
This relation is interesting because it allows to prove the following result.

\begin{prop}\label{PropBoundOnY}
There exists a constant $C \in (0,\infty)$
such that for all $n\in\mathbb{N}$ we have
\begin{equation} \label{eq:wow}
	\e^{(0,0)} \big( |Y_n| \big) \,\le\, C \,, \qquad
	\e^{(0,0)} \left( \frac{1}{v(Y_n)} \right) \,\le\, C \,.
\end{equation}
\end{prop}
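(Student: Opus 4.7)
The plan is to reduce both estimates in \eqref{eq:wow} to a single uniform bound and then apply the $f$-norm ergodic theorem for positive Harris chains. Setting $f(x) := (1+|x|)/v(x)$, the function appearing on the right-hand side of the drift inequality \eqref{eq:drift}, I observe that $f(x) \ge 1/v(x)$ trivially and $f(x) \ge |x|/\|v\|_\infty$ because $v$ is bounded. Hence both estimates would follow from a single uniform bound
\begin{equation*}
	\sup_{n \ge 0} \e^{(0,0)} \bigl[ f(Y_n) \bigr] \;<\; \infty \,,
\end{equation*}
via $\e^{(0,0)}[|Y_n|] \le \|v\|_\infty \, \e^{(0,0)}[f(Y_n)]$ and $\e^{(0,0)}[1/v(Y_n)] \le \e^{(0,0)}[f(Y_n)]$.

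To obtain this uniform-in-$n$ bound, I would invoke the $f$-norm ergodic theorem of Meyn and Tweedie \cite[Thm.~14.0.1]{cf:MT}. The drift inequality \eqref{eq:drift} is precisely of the required form $\cP U \le U - f + b\ind_C$, and most of the remaining hypotheses are already in hand: $\phi$-irreducibility with $\phi = \Leb$ and aperiodicity were noted at the start of Section~\ref{sec:asymp}, positive Harris recurrence is established in Appendix~\ref{sec:Harris}, and $U(0) = 0 < \infty$ from the definition of $U$. It remains to verify that $C = [-M,M]$ is a \emph{small} (hence petite) set. For this I use the strict positivity of $k(x,y)$ on $\R \times \R$ together with assumption~\ref{C2} (in particular $\sup_{|z| \le \gamma} V_2(z) < \infty$) and the continuity and strict positivity of $v$ on compacts: choosing a compact $K \subset \R$ small enough that $y - x \in [-\gamma,\gamma]$ for all $x \in C$ and $y \in K$, one obtains $\inf_{x \in C,\, y \in K} p(x, y) > 0$, which is precisely the Lebesgue minorization that identifies $C$ as a $1$-small set. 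The theorem then yields $\e^{(0,0)}[f(Y_n)] \to \pi(f) < \infty$ as $n \to \infty$, and the desired uniform bound follows (combined with finiteness of $\e^{(0,0)}[f(Y_n)]$ for each individual $n$, which is clear from the explicit formula for the law of $Y_n$).

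The main obstacle I anticipate is the verification of the small-set property for $C$ without any continuity assumption on $V_1$ and $V_2$: the delicate point is to select $K$ so as to exploit the local upper bound on $V_2$ near the origin provided by~\ref{C2}, and to deduce a lower bound on $p$ that is uniform over $x \in C$ despite the absence of regularity. A secondary, purely bookkeeping issue is that the $f$-norm ergodic theorem is conventionally stated for $f \ge 1$; this is easily arranged by the rescaling $(U, f, b) \mapsto \|v\|_\infty (U, f, b)$, since $f \ge 1/\|v\|_\infty > 0$ everywhere.
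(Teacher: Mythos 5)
Your reduction of both bounds to a uniform estimate on $\e^{(0,0)}\bigl[(1+|Y_n|)/v(Y_n)\bigr]$ and your appeal to Theorem~14.0.1 of \cite{cf:MT} via the drift inequality \eqref{eq:drift} is exactly the paper's route; the only point where you genuinely diverge is the verification that $C=[-M,M]$ is petite, and that step, as you set it up, fails. You ask for a compact $K$ of positive Lebesgue measure with $y-x\in[-\gamma,\gamma]$ for all $x\in C$ and $y\in K$; such a $K$ must lie in $\bigcap_{x\in[-M,M]}[x-\gamma,x+\gamma]=[M-\gamma,\gamma-M]$, which is empty (or a single point) unless $\gamma\ge M$. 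But $M$ is dictated by the drift computation --- it must be large enough that $|x|e^{V_1(x)}$ dominates $(c_0+c_1|x|)/\lambda$ plus $1+|x|$ for $|x|>M$ --- while $\gamma$ is whatever small constant assumption \ref{C2} happens to provide, so in general $M\gg\gamma$ and your $K$ does not exist. Moreover, even when the geometry were favourable, the claim $\inf_{x\in C,\,y\in K}p(x,y)>0$ would need $V_1$ to be bounded above on $K$: assumption \ref{C1} gives no local upper bound on $V_1$ (it is merely measurable, finite and bounded below), so $e^{-V_1}$ need not be bounded away from zero on any compact. That particular defect is repairable by minorizing with the measure $e^{-V_1(y)}v(y)\ind_K(y)\,\dd y$ instead of $\Leb$ restricted to $K$, but the emptiness of $K$ is not; one would have to pass to an $m$-step or sampled-kernel minorization, chaining $O(M/\gamma)$ windows of width $\gamma$, which is considerably more delicate for merely measurable potentials.

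The paper sidesteps all of this: in Appendix~\ref{sec:Harris} it shows that $\cP f=\frac{1}{\lambda v}\,\bigl(e^{-V_2}*(e^{-V_1}\,v\,f)\bigr)$ is bounded and continuous whenever $f$ is bounded and continuous, so $Y$ is a $\Leb$-irreducible Feller chain, hence a $T$-chain by Theorem~6.0.1 of \cite{cf:MT}, and for irreducible $T$-chains every compact set is petite --- in particular $[-M,M]$, with no explicit minorization required. Note that you are already leaning on that appendix when you quote positive Harris recurrence (its proof there uses precisely this $T$-chain/petiteness fact), so the clean fix is to cite the same fact for the petiteness of $[-M,M]$ and drop the small-set construction. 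The remaining bookkeeping in your write-up (rescaling by $\|v\|_\infty$ to get $f\ge1$, and finiteness of $\e^{(0,0)}[f(Y_n)]$ for each fixed $n$, e.g.\ by iterating \eqref{eq:drift} from $U(0)<\infty$) is fine.
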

\begin{proof}
In Appendix~\ref{sec:Harris} we prove that $Y = \{Y_i\}_{i\in\N_0}$
is a $T$-chain (see Chapter~6 in~\cite{cf:MT} for the definition of $T$-chains).
It follows by Theorem~6.0.1 in~\cite{cf:MT}
that for irreducible $T$-chains every compact set is petite
(see \S5.5.2 in~\cite{cf:MT} for the definition of petiteness).
We can therefore apply Theorem~14.0.1 in~\cite{cf:MT}: relation
\eqref{eq:drift} shows that condition (iii) in that theorem is satisfied
by the function $U$. Since $U(x) < \infty$ for every $x \in \R$, this
implies that \emph{for every starting point $x_0 \in \R$}
and for every measurable function $g : \R \to \R$ with 
$|g(x)| \le (const.) (1+|x|)/v(x)$ we have
\begin{equation}\label{eq:limitpi}
	\lim_{n\to\infty} \e^{(x_0,0)} \big( g(Y_n) \big) \;=\;
	\int_\R g(z) \, \pi(\dd z) \;<\; \infty \,.
\end{equation}
The relations in \eqref{eq:wow} are obtained by
taking $x_0 = 0$ and $g(x) = |x|$ or $g(x) = 1/v(x)$.
\end{proof}

As a particular case of \eqref{eq:limitpi}, we observe that for
every measurable subset $A \subseteq \R$ and for every $x_0 \in \R$
we have
\begin{equation} \label{eq:ergth}
	\lim_{n\to\infty} \p^{(x_0, 0)}(Y_n \in A) \;=\; \pi(A)
	\;=\; \frac 1c \int_A v(x) \, w(x) \, \dd x \,.
\end{equation}
This is actually a consequence of the classical ergodic theorem for aperiodic Harris
recurrent Markov chains, cf. Theorem~113.0.1 in~\cite{cf:MT}.

\begin{rem}\rm
\label{th:remarkable}
Although we do not use this fact explicitly, it is interesting to observe
that the invariant probability $\pi$ is symmetric. To show this,
we set $\tilde v(x) := e^{-V_1(x)} v(-x)$ and we note
that by the first relation in \eqref{eq:eigeneq},
with the change of variables $y \mapsto -y$, we can write
\begin{equation*}
	\tilde v(x) \;=\; \frac{1}{\lambda} \int_\R e^{-V_1(x)} \,
	k(-x,y) \, v(y) \, \dd y \;=\; \frac{1}{\lambda} \int_\R e^{-V_1(x)} \,
	k(-x,-y) \, e^{V_1(y)} \, \tilde v(y) \, \dd y \,.
\end{equation*}
However $e^{-V_1(x)} \, k(-x,-y) \, e^{V_1(y)} = k(y,x)$,
as it follows by \eqref{eq:K} and the symmetry of $V_1$
(recall our assumption \ref{C1}). Therefore $\tilde v$
satisfies the same functional equation
$\tilde v(x) = \frac{1}{\lambda} \int_\R \tilde v(y) \, k(y,x) \, \dd y$
as the right eigenfunction $w$,
cf. the second relation in \eqref{eq:eigeneq}.
Since the right eigenfunction is uniquely determined up to constant multiples, there must
exist $C > 0$ such that $w(x) = C \, \tilde v(x)$ for all $x\in\R$. Recalling
\eqref{eq:pi}, we can then write
\begin{equation}
	\pi(\dd x) \,=\, \frac{1}{\tilde c} \, 
	e^{-V_1(x)} \, v(x) \, v(-x) \, \dd x \,, \qquad \ \
	\tilde c \,:=\, \frac{c}{C}
	\,,
\end{equation}
from which the symmetry of $\pi$ is evident.

From the symmetry of $\pi$ and \eqref{eq:limitpi} it follows that
$\e^{(0,0)}(Y_n) \to 0$ as $n\to\infty$, whence
the integrated Markov chain $W = \{W_i\}_{i\in\N_0}$ is somewhat close
to a random walk with zero-mean increments. We stress that this follows
by the symmetry of $V_1$, without the need of an analogous requirement on $V_2$.
\end{rem}



\subsection{Some bounds on the density}

We close this section with some bounds on the behavior of the density
$\phi_n^{(0,0)}(x,y)$ at $(x,y) = (0,0)$.

\begin{prop}\label{PropLowerUpperBound}
There exist positive constants $C_1, C_2$ such that for all odd $N\in\N$
\begin{equation} \label{eq:boundphi}
	\frac{C_1}{N} \;\le\; \phi_N^{(0,0)}(0,0) \;\le\; C_2 \,.
\end{equation}
\end{prop}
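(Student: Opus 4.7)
Proposition~\ref{PropFreeModel} gives $\phi_N^{(0,0)}(0,0) = \cZ_{0,N-1}/\lambda^N$; equivalently, since $Y_N = W_N - W_{N-1}$, this equals the joint density $f_{W_{N-1}, Y_N}^{(0,0)}(0, 0)$ of the pair at the origin. Both bounds will be obtained by analyzing this density through the Markov structure of $(Y_n)$ and the asymptotic estimates of Section~\ref{sec:asymp}.

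\textbf{Upper bound.} I would condition on $Y_{N-1}$ via the Markov property to write
$$\phi_N^{(0,0)}(0,0) \;=\; \int f_{W_{N-1}, Y_{N-1}}^{(0,0)}(0, u) \, p(u, 0) \, \dd u.$$
The explicit form $p(u, 0) = v(0)\, e^{-V_1(0) - V_2(-u)}/(\lambda v(u))$ combined with $\|e^{-V_2}\|_\infty < \infty$ (from \ref{C2}) gives the pointwise estimate $p(u, 0) \le C/v(u)$. A Hölder/Cauchy--Schwarz decoupling of the remaining integral then produces two tractable pieces: a uniform $L^\infty$ bound on the marginal density $f_{W_{N-1}}$, which follows from its convolution structure together with $e^{-V_2} \in L^1 \cap L^\infty$, and the bound $\e^{(0,0)}[1/v(Y_{N-1})] \le C$ provided by Proposition~\ref{PropBoundOnY}. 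This yields $\phi_N^{(0,0)}(0,0) \le C_2$ uniformly in $N$.

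\textbf{Lower bound.} For odd $N = 2k+1$, I would split the trajectory at time $k$ using the joint Markov property of the pair chain $(W_n, Y_n)$ to obtain
$$\phi_N^{(0,0)}(0, 0) \;=\; \iint f_{W_k, Y_k}^{(0,0)}(x, u)\, f_{W_k, Y_{k+1}}^{(u, 0)}(-x, 0)\, \dd x\, \dd u,$$
a symmetric decomposition that is natural only when $N-1 = 2k$ is even. By the first-moment estimate $\e^{(0,0)}|Y_n| \le C$ of Proposition~\ref{PropBoundOnY}, one has $\e^{(0,0)}|W_k| \le Ck$ and (by Markov's inequality on $|W_k|$ and $|Y_k|$) the density $f_{W_k, Y_k}^{(0,0)}$ concentrates at least half of its total mass on a rectangle $R$ of area $O(k)$; hence its average on $R$ is at least $c/k$. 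The second density $f_{W_k, Y_{k+1}}^{(u, 0)}(-x, 0)$ can then be bounded below pointwise by a strictly positive constant for $(x, u) \in R$, exploiting the strict positivity and continuity of the kernel $p(\cdot, \cdot)$ together with an equicontinuity argument in the starting point $u$ (and the symmetry of $\pi$ from Remark~\ref{th:remarkable}). Combining gives $\phi_N^{(0,0)}(0, 0) \ge c/k \asymp C_1/N$.

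\textbf{Main obstacle.} The delicate step is the lower bound: extracting a pointwise bound on the joint density at the specific point $(0, 0)$ from integrated, concentration-type information about the trajectory, under the minimal assumptions \ref{C1}--\ref{C2}. The weaker rate $1/N$—as opposed to the sharp local-CLT rate $1/\sqrt{N}$ one would naively expect—is forced by working with first moments only, since \ref{C2} does not guarantee finiteness of $\e_\pi(Y^2)$; and the restriction to odd $N$ is a natural artifact of the midpoint-splitting, ensuring that the two ``half-trajectories'' in the decomposition have comparable length.
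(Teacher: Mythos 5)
Your midpoint factorization $\phi_{2k+1}^{(0,0)}(0,0)=\iint f^{(0,0)}_{W_k,Y_k}(x,u)\,f^{(u,0)}_{W_k,Y_{k+1}}(-x,0)\,\dd x\,\dd u$ is correct, and so is the concentration step: by Markov's inequality and Proposition~\ref{PropBoundOnY}, a rectangle $R=[-C'k,C'k]\times[-M,M]$ carries mass at least $1/2$ of the first density. The gap is the next claim, namely that $f^{(u,0)}_{W_k,Y_{k+1}}(-x,0)\ge c_0>0$ \emph{uniformly over $(x,u)\in R$ and over $k$}. Positivity and continuity of the one-step kernel $p$, plus equicontinuity in the starting point, can only give lower bounds on a fixed compact set for a bounded number of steps; here the number of steps is $k+1$ and $x$ ranges over an interval of length of order $k$, and the density of a $k$-step integrated chain with (essentially) centered increments, evaluated at spatial points at distance of order $k$ from its start, is not bounded below uniformly in $k$ --- it tends to $0$ (exponentially fast when increments have exponential moments). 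Moreover, if the claim were true, your own decomposition would give not $c/k$ but a constant: $\iint_R f_1 f_2\ge c_0\int_R f_1\ge c_0/2$, i.e.\ $\phi_N^{(0,0)}(0,0)\ge c_0/2$ for all odd $N$, which is false already in the Gaussian case of \cite{B}, where $\phi_N^{(0,0)}(0,0)$ decays like $N^{-1/2}$ (and is incompatible with the conjectured behavior recalled right after the proposition); this internal inconsistency signals that the $1/N$ you report is not actually produced by the argument. The paper never needs a pointwise lower bound on any density: it restricts the integral defining $\cZ_{0,2N}$ to trajectories with $|\nabla\varphi_N|,|\nabla\varphi_{N+1}|<\gamma/2$, so that the midpoint Laplacian term costs at most $M_\gamma$ (assumption \ref{C2}), uses the symmetry of $V_1$ (assumption \ref{C1}) to write the restricted integral as the \emph{square} of a half-trajectory integral, and applies Jensen's inequality on the window $\varphi_N\in[-c_N,c_N]$; the factor $1/N$ comes from the prefactor $1/(2c_N)$ with $c_N\sim N$ dictated by $\e^{(0,0)}|W_N|\le CN$, while the squared half-trajectory term is a probability bounded below by a constant via Markov's inequality and the ergodic estimate \eqref{eq:ergth} for $\p^{(0,0)}(|Y_N|\le\gamma/2)$.

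The upper bound is closer to being right, but the stated ``H\"older/Cauchy--Schwarz decoupling'' does not exist: what it would require is $f^{(0,0)}_{W_{N-1},Y_{N-1}}(0,u)\le \|f_{W_{N-1}}\|_\infty\, f_{Y_{N-1}}(u)$, i.e.\ a bound on the conditional density of $W_{N-1}$ given $Y_{N-1}=u$ by the supremum of the marginal, which is not valid in general (and $W_{N-1}$ has no exact convolution structure, since the increments $Y_i$ are Markov, not i.i.d.). The repair is to go one step further back and integrate the pinning constraint out rather than bound it: writing $f^{(0,0)}_{W_{N-1},Y_{N-1}}(0,u)=\int f^{(0,0)}_{W_{N-2},Y_{N-2}}(-u,y)\,p(y,u)\,\dd y$ and using $p(y,u)\,p(u,0)\le C/v(y)$ (only the lower-boundedness of $V_1,V_2$ is needed), the integration over $u$ eliminates the constraint $W_{N-1}=0$ and leaves $C\,\e^{(0,0)}\big(1/v(Y_{N-2})\big)\le C'$ by Proposition~\ref{PropBoundOnY}. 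This is exactly the paper's mechanism, where the potentials tying $\varphi_{2N-2},\varphi_{2N-1}$ to the zero boundary values are replaced by the constant $\tilde c$ and what remains is $\e^{(0,0)}\big(1/v(Y_{2N-1})\big)$. In short: your upper bound is repairable and essentially the paper's; the lower bound needs the symmetrization-plus-Jensen argument (or a genuine substitute), not a uniform pointwise density estimate on a region whose width grows with $N$.
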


\noindent
The restriction to odd values of $N$ is just for technical convenience.
We point out that neither of the bounds in \eqref{eq:boundphi} is sharp, as the conjectured
behavior (in analogy with the pure gradient case,
cf.~\cite{CGZ}) is $\phi_N^{(0,0)}(0,0) \sim (const.) \, N^{-1/2}$.



\begin{proof}[Proof of Proposition~\ref{PropLowerUpperBound}]

We start with the lower bound.
By Proposition \ref{PropFreeModel} and equation \eqref{eq:Z}, we have
\[
	\varphi_{2N+1}^{(0,0)}(0,0) \;=\; \frac{1}{\lambda^{2N+1}} \, \cZ_{0,2N}
	\;=\; \frac{1}{\lambda^{2N+1}}\int_{\mathbb{R}^{2N-1}}
	e^{-\sum_{i=1}^{2N+1} V_1(\nabla\varphi_i)-
	\sum_{i=0}^{2N} V_2(\Delta\varphi_i)}\,\prod_{i=1}^{2N-1}\,\dd\varphi_i \,,
\]
where we recall that
the boundary conditions are $\varphi_{-1}=\varphi_{0}=\varphi_{2N}=\varphi_{2N+1}=0$.
To get a lower bound, we restrict the integration on the set
\begin{equation*}
	C_N^1 \;:=\; \left\{ (\phi_1, \ldots, \phi_{2N-1}) \in \R^{2N-1}:\
	|\varphi_{N}-\varphi_{N-1}| < \frac \gamma 2\ ,\ |\varphi_{N}-\varphi_{N+1}| <
	\frac \gamma 2 \right\} \,,
\end{equation*}
where $\gamma > 0$ is the same as in assumption \ref{C2}.
On $C_N^1$ we have $|\nabla\varphi_{N+1}|<\gamma/2$ 
and $|\Delta\varphi_{N}|<\gamma$, therefore
$V_2(\Delta\phi_{N}) \le M_\gamma := \sup_{|x| \le \gamma} V_2(x) < \infty$.
Also note that $V_{1}(\nabla\phi_{2N+1}) = V_1(0)$ due to the boundary conditions.
By the symmetry of $V_1$ (recall assumption \ref{C1}),
setting $C_N^2(\phi_N) := \{ (\phi_1, \ldots, \phi_{N-1}) \in \R^{N-1}:\
|\varphi_{N}-\varphi_{N-1}| < \gamma/2\}$,
we can write
\begin{align*}
 	& \varphi_{2N+1}^{(0,0)}(0,0) \\
	& \;\geq\; \frac{e^{-(M_\gamma + V_1(0))}}{\lambda^{2N+1}}\int_{C_N^1}
	e^{-\sum_{i=1}^{N} V_1(\nabla\varphi_i)-\sum_{i=0}^{N-1} V_2(\Delta\varphi_i)}
	\ e^{-\sum_{i=N+1}^{2N+1} V_1(\nabla\varphi_i)-\sum_{i=N+1}^{2N} V_2(\Delta\varphi_i)}
	\prod_{i=1}^{2N-1} \dd\varphi_i \\
	& \;=\; \frac{e^{-(M_\gamma + V_1(0))}}{\lambda^{2N+1}} \int_{\mathbb{R}}
	\dd\varphi_N \left[\int_{C_N^2(\phi_N)}
	e^{-\sum_{i=1}^{N} V_1(\nabla\varphi_i)-\sum_{i=0}^{N-1} V_2(\Delta\varphi_i)}
	\,\prod_{i=1}^{N-1}\,\dd\varphi_i\right]^2 \,.
\end{align*}
For a given $c_N>0$, we restrict the integration over $\phi_N \in [-c_N, c_N]$
and we apply Jensen's inequality, getting
\begin{align}\label{lowerBoundPhi}
 	\varphi_{2N+1}^{(0,0)}(0,0) & \;\geq\; \frac{e^{-(M_\gamma + V_1(0))}}{\lambda \cdot 2c_N}
	\left[\frac{1}{\lambda^N}\int_{-c_N}^{c_N}\,\dd\varphi_N\int_{C_N^2(\phi_N)}
	e^{-\sum_{i=1}^{N} V_1(\nabla\varphi_i)-\sum_{i=0}^{N-1} V_2(\Delta\varphi_i)}
	\,\prod_{i=1}^{N-1}\,\dd\varphi_i\right]^2\nonumber \\
	& \;\ge\; \frac{e^{-(M_\gamma + V_1(0))}}{\lambda \cdot 2c_N} \,
	\frac{v(0)}{\|v\|_\infty}
	\left[\frac{1}{\lambda^N}\int_{-c_N}^{c_N}\,\dd\varphi_N\int_{C_N^2(\phi_N)}
	\frac{v(\varphi_{N}-\varphi_{N-1})}{v(0)}\right.\nonumber\\
	&\qquad\qquad\qquad\qquad\quad\qquad\qquad\left. \cdot\ e^{-\sum_{i=1}^{N}
	V_1(\nabla\varphi_i)-\sum_{i=0}^{N-1} V_2(\Delta\varphi_i)}
	\,\prod_{i=1}^{N-1}\,\dd\varphi_i\right]^2\nonumber\\
	& \;=\; \frac{e^{-(M_\gamma + V_1(0))}}{\lambda \cdot 2c_N} \,
	\frac{v(0)}{\|v\|_\infty}
	\left[\p^{(0,0)}(|W_N|\leq c_N\ ,\ |W_N-W_{N-1}|\leq\gamma/2)\right]^2 \,,
\end{align}
where in the last equality we have used Proposition~\ref{PropLawOfW}. Now we observe that
\begin{align} \label{eq:probest}
\begin{split}
	\p^{(0,0)}(|W_N|\leq c_N\ ,\ |Y_N| \leq \gamma/2) &\;\ge\;
	1 - \p^{(0,0)}(|W_N|> c_N) - \p^{(0,0)}(|Y_N| > \gamma/2) \\
	& \;\geq\; 1 - \frac{1}{c_N} \, \e^{(0,0)}[|W_N|]
	- \p^{(0,0)}(|Y_N| > \gamma/2) \,.
\end{split}
\end{align}
By \eqref{eq:ergth}, as $N\to\infty$ we have $\p^{(0,0)}(|Y_N| > \gamma/2) \to
\pi( \R \setminus (-\frac\gamma 2, \frac \gamma 2)) =: 1 - 3\eta$,
with $\eta > 0$, therefore $\p^{(0,0)}(|Y_N| > \gamma/2) \le 1- 2\eta$
for $N$ large enough. On the other hand, by Proposition~\ref{PropBoundOnY} we have
\begin{equation}\label{GenPotInacurracy}
	\e^{(0,0)}[|W_N|] \;\leq\; \sum_{n=1}^N \e^{(0,0)}[|Y_n|]
	\;\leq\; C \, N \,.
\end{equation}
If we choose $c_N := C N / \eta$, from \eqref{lowerBoundPhi}, \eqref{eq:probest}
and \eqref{GenPotInacurracy} we obtain
\begin{equation*}
	\varphi_{2N+1}^{(0,0)}(0,0) \;\ge\;
	\frac{e^{-(M_\gamma + V_1(0))}}{2\lambda C} \, \frac{v(0)}{\|v\|_\infty}
	\, \eta^3 \, \frac{1}{N} \;=\; \frac{(const.)}{N} \,,
\end{equation*}
which is the desired lower bound in \eqref{eq:boundphi}.

\smallskip

The upper bound is easier.
By assumptions \ref{C1} and \ref{C2} both $V_1$ and $V_2$ are bounded
from below, therefore we can replace $V_1(\nabla \phi_{2N+1})$,
$V_1(\nabla \phi_{2N})$, $V_2(\Delta \phi_{2N})$
and $V_2(\Delta \phi_{2N-1})$ by the constant $\tilde c :=
\inf_{x\in\R} \min\{V_1(x), V_2(x)\} \in \R$ getting the upper bound:
\begin{align*}
	\varphi^{(0,0)}_{2N+1}(0,0) & \;=\; \frac{1}{\lambda^{2N+1}}
	\int_{\mathbb{R}^{2N-1}} e^{-\mathcal{H}_{[-1,2N+1]}(\varphi)} 
	\,\prod_{i=1}^{2N-1}\,\dd\varphi_i \\
	& \;\leq\; \frac{e^{-4 \tilde c}}{\lambda^{2N+1}} \,
	\int_{\mathbb{R}^{2N-1}}e^{-\mathcal{H}_{[-1,2N-1]}(\varphi)} 
	\,\prod_{i=1}^{2N-1}\,\dd\varphi_i\,.
\end{align*}
Recalling Proposition~\ref{PropLawOfW} and Proposition~\ref{PropBoundOnY}, we obtain
\begin{align*}
	\varphi^{(0,0)}_{2N+1}(0,0) & \;\le\; 
	\frac{e^{-4 \tilde c}}{\lambda^{2}} \,\int_{\mathbb{R}^{2}} 
	\frac{v(0)}{v(\varphi_{2N-1} - \varphi_{2N-2})} \,
	\p^{(0,0)} (W_{2N-2} \in  \dd\varphi_{2N-2},\ W_{2N-1} \in \dd\varphi_{2N-1}) \\
	& \;=\; \frac{v(0)}{\lambda^{2}}\,e^{-4 \tilde c} \, \e^{(0,0)}
	\bigg( \frac{1}{v(Y_{2N-1})} \bigg) \;\le\; \frac{v(0)}{\lambda^{2}}\,e^{-4 \tilde c}
	\, C \;=\; (const.) \,,
\end{align*}
which completes the proof of \eqref{eq:boundphi}.
\end{proof}


\medskip

\section{A lower bound on the partition function}

\label{sec:lb}

We are going to give an explicit lower bound on the partition function in
terms of a suitable renewal process. First of all, we rewrite
equation \eqref{eq:Z} as
\begin{equation} \label{eq:Z2}
	 \Ze = \sum_{k=0}^{N-1}\varepsilon^k\sum_{\substack{A\subseteq\{1,...,N-1\}\\|A|=k}}
	 \int e^{-\mathcal{H}_{[-1,N+1]}(\varphi)}
	 \prod_{m\in A}\delta_0(\dd\varphi_m)\,\prod_{n\in A^c}\dd\varphi_n \,,
\end{equation}
where we set $A^c := \{1, \ldots, N-1\} \setminus A$ for convenience.


\subsection{A renewal process lower bound}

We restrict the summation over $A$
in \eqref{eq:Z2} to the class of subsets $\mathfrak{B}_{2k}$ consisting of $2k$ points
organized in $k$ consecutive couples:
\[
	\mathfrak{B}_{2k} \;:=\; \big\{\{t_1-1,t_1, \ldots ,t_k-1,t_k\}\ |\ 0=t_0<t_1< \ldots <t_k \le N-1 
	\text{ and } t_i-t_{i-1}\geq 2 \ \forall i \big\} \,.
\]
Plainly, $\mathfrak{B}_{2k} = \emptyset$ for $k > (N-1)/2$. We then obtain from \eqref{eq:Z2}
\begin{align}\label{ZlowerBound}
	\Ze & \;\ge\; \sum_{k=0}^{\lfloor (N-1)/2 \rfloor}
	\varepsilon^{2k}\sum_{A\in\mathfrak{B}_{2k}}\int 
	e^{-\mathcal{H}_{[-1,N+1]}(\varphi)}\prod_{m\in A}\delta_0(\dd\varphi_m)
	\,\prod_{n\in A^C}\dd\varphi_n \nonumber \\
	&\;=\; \sum_{k=0}^{\lfloor (N-1)/2 \rfloor} \varepsilon^{2k}
	\sum_{\substack{0=t_0<t_1<...<t_k < t_{k+1} = N+1\\
	t_i-t_{i-1}\geq 2 \ \forall i \le k+1}}\ \prod_{j=1}^{k+1}\widetilde{K}(t_j-t_{j-1}) \,,
\end{align}
where we have set for $n \in \N$
\begin{equation} \label{eq:Ktilde}
	\widetilde{K}(n) \;:=\; \begin{cases}
	0 & \text{ if } n=1 \\
	\rule{0pt}{1.4em}e^{-\mathcal{H}_{[-1,2]}(0,0,0,0)} = e^{-2V_1(0) - 2V_2(0))} & \text{ if }n=2\\
	\rule{0pt}{3em}
	\!\!\left.\begin{split}
	& \int_{\mathbb{R}^{n-2}}e^{-\mathcal{H}_{[-1,n]}(w_{-1},...,w_n)}\dd w_1\cdots \dd w_{n-2}\\
	& \rule{0pt}{1.1em}\text{with } w_{-1}=0,w_0=0,w_{n-1}=0,w_n=0
	\end{split}\right\}
	& \text{ if }n\geq 3
    \end{cases} \ .
\end{equation}
We stress that a factorization of the form \eqref{ZlowerBound} is possible
because the Hamiltonian $\mathcal{H}_{[-1,N+1]}(\varphi)$ 
consists of two- and three-body terms and we have
restricted the sum over subsets in $\mathfrak{B}_{2k}$, that consist
of consecutive couples of zeros. We also note that the condition
$t_i-t_{i-1}\geq 2$ is immaterial, because by definition $\widetilde K(1)=0$.

We now give a probabilistic interpretation to the right hand side of \eqref{ZlowerBound}
in terms of a renewal process. To this purpose, for every $\gep > 0$
and for $n\in\N$ we define
\[
	K_\gep(1) \;:=\; 0 \,, \qquad
	K_\gep (n) \;:=\; \frac{\varepsilon^2}{\lambda^n}\,\widetilde{K}(n)\,e^{-\mu_\varepsilon n}
	\;=\; \varepsilon^2 \, \varphi_n^{(0,0)}(0,0) \, e^{-\mu_\varepsilon n} \,,
	\ \forall n \ge 2 \,.
\]
where the second equality follows recalling \eqref{eq:Ktilde}, Proposition~\ref{PropLawOfW}
and the definition \eqref{eq:phi} of the density $\phi_n$.
The constant $\mu_\gep$ is chosen to make $K_\gep$ a probability on $\N$:
\begin{equation} \label{eq:mu}
	\sum_{n\in\mathbb{N}} K_{\varepsilon}(n) \;=\;1 \,, \qquad
	\text{that is} \qquad \sum_{n=2}^\infty \varphi_n^{(0,0)}(0,0) \, e^{-\mu_\varepsilon n}
	\;=\; \frac{1}{\gep^2} \,.
\end{equation}
It follows from Proposition~\ref{PropLowerUpperBound} that $0 < \mu_\varepsilon < \infty$
for every $\varepsilon>0$.
We can therefore define a renewal process $(\{\eta_n\}_{n\ge 0}\}, \cP_\gep)$
on $\N_0$ with inter-arrival law $K_\gep(\cdot)$. More explicitly,
$\eta_0 := 0$ and the increments $\{\eta_{k+1}-\eta_k\}_{k \ge 0}$
are independent, identically distributed random variables with marginal law
$\mathcal{P}_{\varepsilon}(\eta_{k+1}-\eta_k = n) = K_{\varepsilon}(n)$.
Coming back to \eqref{ZlowerBound}, we can write
\begin{align}\label{ZEpsilonRenewal}
	\Ze & \;\ge\; \frac{\lambda^{N+1}\,e^{(N+1)\mu_\varepsilon}}{\varepsilon^2}
	\sum_{k=0}^{\lfloor (N-1)/2 \rfloor}
	\sum_{0=t_0<t_1<...<t_k < t_{k+1} = N+1}\ \prod_{j=1}^{k+1} K_\gep(t_j-t_{j-1})
	\nonumber \\
	& \;=\; \frac{\lambda^{N+1}\,e^{(N+1)\mu_\varepsilon}}{\varepsilon^2}
	\sum_{k=0}^{\lfloor (N-1)/2 \rfloor}
	\sum_{0=t_0<t_1<...<t_k < t_{k+1} = N+1}
	\cP_\gep \big( \eta_1 = t_1 , \ldots, \eta_{k+1} = t_{k+1} \big) \nonumber \\
	& \;=\; \frac{\lambda^{N+1}\,e^{(N+1)\mu_\varepsilon}}{\varepsilon^2}
	\sum_{k=0}^{\lfloor (N-1)/2 \rfloor} \cP_\gep \big( \eta_{k+1}=N+1 \big)
	\;=\; \frac{\lambda^{N+1}\,e^{(N+1)\mu_\varepsilon}}{\varepsilon^2}
	\cP_\gep \big( N+1 \in \eta \big) \,,
\end{align}
where in the last equality we look at $\eta = \{\eta_k\}_{k\ge 0}$ as a random
subset of $\N_0$, so that $\{N+1 \in \eta\} =
\bigcup_{m =1}^\infty \{\eta_m = N+1\}$ (note that $\cP_\gep(\eta_{k+1}=N+1)=0$ for 
$k > \lfloor (N-1)/2 \rfloor$).

We have thus obtained a lower bound on the partition function $\Ze$ of our
model in terms of the renewal mass function (or Green function) of the renewal
process $(\{\eta_n\}_{n\ge 0}\}, \cP_\gep)$.


\subsection{Proof of Theorem~\ref{Theorem}}

Recall the free energy from definition \ref{freeEnergy}
\begin{equation*}
	F(\varepsilon)=\lim_{N\rightarrow\infty}\frac{1}{N}\log{\frac{\Ze}{\cZ_{0,N}}}\ .
\end{equation*}
From now on, the limits $N\rightarrow\infty$ will be implicitly taken along the
odd numbers. Observe that by Proposition \ref{PropFreeModel} and both bounds in 
Proposition~\ref{PropLowerUpperBound} 
\[
	\lim_{N\rightarrow\infty}\frac{1}{N}\log{\cZ_{0,N}}
	\;=\; \lim_{N\rightarrow\infty}\frac{1}{N}\left((N+1)\log{\lambda}
	+\log{\varphi_{N+1}^{(0,0)}(0,0)}\right) \;=\; \log{\lambda}\ .
\]
Therefore for every $\varepsilon>0$ by \eqref{ZEpsilonRenewal} we obtain
\begin{align}
	\lim_{N\rightarrow\infty}\frac{1}{N}\log{\frac{\Ze}{\cZ_{0,N}}}
	& \;\geq\; \limsup_{N\rightarrow\infty}\frac{1}{N}
	\log{\left[\frac{\lambda^{N+1}\,e^{\mu_\varepsilon\,(N+1)}}{\varepsilon^2}\,
	\cP_\varepsilon(N+1\in\eta)\right]} -\, \log \lambda \nonumber\\
	\label{eq:quasilast}
	&\;\geq\; \mu_\varepsilon \,+\, \limsup_{N\rightarrow\infty}
	\frac{1}{N}\log \cP_\varepsilon(N+1\in\eta)  \,.
\end{align}
Since $\mathcal{P}_\varepsilon(\eta_1=n)>0$ for all $n\in\mathbb{N}$
with $n\geq 2$, the renewal process $(\{\eta_k\}_{k\ge 0}, \cP_\gep)$ is aperiodic and by the
classical renewal theorem $\cP_\varepsilon(N+1\in\eta) \to \frac{1}{m_\varepsilon}$
as $N\to\infty$, where
\[
	m_\varepsilon \;=\; \sum_{n\geq 2}n\,K_\varepsilon(n)
	\;=\; \varepsilon^2\sum_{n\geq 2}n\,\varphi_n^{(0,0)}(0,0)\,e^{-\mu_\varepsilon n}\ <\ \infty\ .
\]
by Proposition \ref{PropLowerUpperBound}. Therefore from \eqref{eq:quasilast}
we get $F(\gep) \ge \mu_\gep$.
As we already mentioned above, we have $\mu_\varepsilon>0$,
hence $F(\gep) > 0$, for all $\varepsilon>0$. This shows that our model
exhibit a trivial phase transition.\qed

\begin{rem}\rm
\label{th:improving}
We have just shown that $F(\gep) \ge \mu_\gep$. Recalling the definition \eqref{eq:mu} of $\mu_\gep$,
it is clear that the lower bound in \eqref{eq:boundphi} on $\phi_N^{(0,0)}(0,0)$ 
yields a corresponding lower bound on $\mu_\gep$, hence on $F(\gep)$.
Unfortunately, this lower bound is very poor: in fact, by standard Tauberian theorems, 
from \eqref{eq:boundphi} we get $\mu_\gep \ge \exp(-(const.)/\gep^2)$, 
which vanishes as $\gep \downarrow 0$
faster than any polynomial. On the other hand,
the conjectured correct behavior of the free energy, in analogy with the
purely gradient case, should be $F(\gep) \sim (const.)\, \gep^2$.

One could hope to sharpen the lower bound on $\mu_\gep$ by improving
the one on $\phi_N^{(0,0)}(0,0)$. This is possible, but only to a certain extent:
even the conjectured sharp lower bound 
$\phi_N^{(0,0)}(0,0) \ge (const.)/\sqrt{N}$ (in analogy with the purely gradient case)
would yield only $\mu_\gep \ge (const.) \, \gep^4$.
This discrepancy is a limitation of our lower bound technique: 
in order to have a genuine renewal structure,
the chain is forced to visit the defect line at \emph{couples of neighboring points},
which are rewarded $\gep^2$ instead of $\gep$. If one could replace
$1/\gep^2$ by $1/\gep$ in \eqref{eq:mu}, the lower bound 
$\phi_N^{(0,0)}(0,0) \ge (const.)/\sqrt{N}$ would yield
$\mu_\gep \ge (const.') \, \gep^2$, as expected.
\end{rem}


\appendix

\medskip

\section{Some recurrence properties}
\label{sec:Harris}

We have already remarked that $Y = \{Y_i\}_{i\in\N_0}$
is $\Leb$-irreducible, hence it is also $\pi$-irreducible,
see \eqref{eq:pi}, because $\pi$ is absolutely continuous with respect to $\Leb$.
By Proposition~4.2.2 in~\cite{cf:MT}, a maximal irreducibility measure
for $Y$ is
$\psi(\dd x) := \sum_{n=0}^\infty \frac{1}{2^{n+1}} (\pi \cP^n)(\dd x)$,
where we set $(\pi \cQ)(\dd x) := \int_{z \in \R} \pi(\dd z) \cQ(z, \dd x)$ 
for any kernel $\cQ$ and we use the standard notation $\cP^{0}(z, \dd x) := \gd_z(\dd x)$,
$\cP^{1} = \cP$ (we recall \eqref{eq:p}) and for $n \ge 1$
\begin{equation*}
	\cP^{n+1}(z, \dd x) \;:=\; \int_{y \in \R} \cP^{n} (z, \dd y) \, \cP(y, \dd x) \,.
\end{equation*}
Since the law $\pi$ is invariant for the kernel $\cP$, see \eqref{eq:invpi},
we have $\pi \cP^n = \pi$ for all $n\ge 0$, therefore the maximal irreducibility
measure $\psi$ is nothing but $\pi$ itself. Since a maximal irreducibility measure
is only defined up to equivalent measures (in the sense of Radon-Nikodym), it
follows that $\Leb$, which is equivalent to $\pi$, is a
maximal irreducibility measure.

(As a matter of fact, it is always true that if a $\phi$-irreducible 
Markov chain admits an invariant measure $\pi$,
then $\pi$ is a maximal irreducibility measure, cf. Theorem~5.2 in~\cite{cf:Num}.)

\smallskip

Next we prove that $Y$ is a $T$-chain, as it is
defined in Chapter~6 of~\cite{cf:MT}. To this purpose,
we first show that $Y$ is a Feller chain, that is,
for every bounded and continuous function $f: \R \to \R$ the function
$(\cP f)(x) := \int_\R \cP(x, \dd y) \, f(y)$ is bounded and continuous.
We recall that the function $v$ is continuous, as we have shown in \S\ref{sec:asymp}.
We then write
\begin{align*}
	(\cP f)(x) & \;:=\; \int_\R \cP(x, \dd y) \, f(y)
	\;=\; \frac{1}{\lambda \, v(x)} \, \int_\R
	e^{-V_1(y) - V_2(y-x)} \, v(y) \, f(y) \, \dd y \\
	& \;=\; \frac{1}{\lambda \, v(x)} \big( e^{-V_2} *
	(e^{-V_1} \, v \, f) \big)(x) \,,
\end{align*}
from which the continuity of $\cP f$ follows, because
$e^{-V_2} \in L^\infty(\R)$ and $(e^{-V_1} \, v \, f) \in L^1(\R)$
and we recall that the convolution of a function in $L^\infty(\R)$ with a function in $L^1(\R)$
is bounded and continuous.
Since $Y$ is a $\Leb$-irreducible Feller chain,
it follows from Theorem~6.0.1 (iii) in~\cite{cf:MT} that $Y$
is a $\Leb$-irreducible $T$-chain.

\,

Finally, we observe that from the drift condition \eqref{eq:drift}
it follows that $Y$ is a Harris recurrent chain. For this it suffices
to apply Theorem~9.1.8 in~\cite{cf:MT}, observing that the function
$U$ defined in \eqref{eq:U} is \emph{coercive}, i.e. $\lim_{|x| \to\infty}
U(x) = +\infty$, hence it is ``unbounded off petite sets''
(cf. \cite[\S8.4.2]{cf:MT}) because every compact set is petite for
irreducible $T$-chains, by Theorem~6.0.1 (ii) in~\cite{cf:MT}.


\section*{Acknowledgements}

We thank Jean-Dominique Deuschel for fruitful discussions.
F.C. gratefully acknowledges the support of the University
of Padova under grant CPDA082105/08.


\bigskip

\end{document}